\newcommand{\Thue}{\mathbf{t}}
\newtheorem{theorem}{Theorem}
\newtheorem{lemma}[theorem]{Lemma}
\newtheorem{corollary}{Corollary}
\theoremstyle{definition}
\newcommand{\EXTRA}[1]{}
\begin{document}

\title{Square-free Walks on Labelled Graphs}

\author{Tero Harju\\
         Department of Mathematics\\
         University of~Turku, Finland\\
         \texttt{harju@utu.fi}
         }

\maketitle

\begin{abstract}
A finite or infinite word is called  a $G$-word for a labelled graph $G$ on the vertex set $A_n = \{0,1, \ldots, n-1\}$ if
$w = i_1i_2 \cdots i_k \in A_n^*$, where each factor $i_ji_{j+1}$ is an edge of $E$, i.e,
$w$ represents  a walk in $G$.
We show that there exists a square-free infinite $G$-word if and only if $G$ has no subgraph
isomorphic to one of the cycles $C_3, \ C_4, \ C_5$, the path $P_5$ or the claw $K_{1,3}$.
The colour number $\gamma(G)$ of a graph $G=(A_n,E)$ is the smallest integer~$k$, if it exists,
for which there exists a mapping $\varphi\colon A_n \to A_k$ such that
$\varphi(w)$ is square-free for an infinite $G$-word $w$.
We show that $\gamma(G)=3$ for $G=C_3, C_5, P_5$, but $\gamma(G)=4$
for $G=C_4, K_{1,3}$. In particular, $\gamma(G) \leq 4$ for all graphs that
have at least five vertices.
\end{abstract}

\bigskip

\noindent\textbf{Note}
The author is thankful for James Currie for pointing out that,
apart from some technical differences,
the results of the present paper are covered by his paper

\smallskip
\noindent
James   D.  Currie:  Which  graphs  allow  infinite
nonrepetitive  walks?
\emph{Discrete Math.}  87  (1991)   249--260.

\bigskip

\noindent
\textbf{Keywords: } Square-free words, infinite words, $G$-words, graphs.

\section{Introduction}

Alon et al.~\cite{Alon} initiated the study of non-repetiveness of simple paths in
edge coloured graphs.
In~\cite{Alon} an edge  colouring  of a graph $G$ is
square-free  if the sequence of colours on every simple path in $G$
consists of a square-free word.
The problem for vertex coloured graphs was consider, e.g.,
by  Bar\'at and  Varj{\'u}~\cite{BaratVarju}.
For results on these settings, see also~\cite{BaratWood,Bresar1, Bresar2,Grytczuk}.
In the present paper we consider the problem when a vertex colouring of a graph has an infinite
square-free walk.

We consider square-freeness of \emph{infinite words} $w\colon \mathbb{N} \to A_n$
over the alphabets
$A_n = \{0,1, \ldots, n-1\}$ of cardinality $n$.
An infinite word $w$ will be represented as a sequence of letters from $A_n$,
i.e., $w=i_1i_2 \cdots$ with $i_j \in A_n$ for all $j \in \mathbb{N}$.
A (finite) \emph{word} is a finite sequence of letters.
The set of all finite words over $A_n$ is denoted by $A_n^*$.
This set contains the empty word.
The length of a word $w$ is denoted by $|w|$.
If $w=u_1vu_2$, then $v$ is a \emph{factor} of $w$.
If here $u_1$ is the empty word, then $v$ is a \emph{prefix} of $w$ and if $u_2$ is empty,
then $v$ is a \emph{suffix} of $w$.
The above terminology generalizes to infinite words in a natural way.

A finite or infinite word $w$ is \emph{square-free} if it does not contain any factor
of the form $u^2 =uu$ for a nonempty word $u$.
Thue~\cite{Thue} showed a hundred years ago that there are infinite
square-free words over the ternary alphabet $A_3$; see Lothaire~\cite{Lothaire} or
the historical survey by Berstel and Perrin~\cite{BerstelPerrin}.
An example of an infinite square-free word can be obtained by iterating the morphism
\[
\tau(0)=012, \quad \tau(1)=02, \quad \tau(2)=1
\]
starting from the letter $0 \in A_3$. The result is the infinite  word
\begin{equation}\label{thue}
\Thue=012021012102012021020121012 \cdots\,.
\end{equation}
Observe that $\Thue$ does not have factors $010$ and $212$ .
We call $\Thue$ the \emph{Thue word} although  it is due to Istrail~\cite{Istrail}.

A mapping  $\alpha\colon A^* \to B^*$ between word sets over the alphabets $A$
and $B$ is a \emph{morphism}, if $\alpha(uv)=\alpha(u)\alpha(v)$ for
all words $u,v \in A^*$. Clearly, each morphism $\alpha\colon A^* \to B^*$
is determined by its images $\alpha(a)$ of the letters $a \in A$.
A~morphism $\alpha$ is \emph{square-free}, if it preserves square-freeness of words,
i.e., if $v\in A^*$ is square-free, then so is the image $\alpha(v) \in A^*$.

Let $G=(A_n,E)$ be an undirected graph on the vertex set $A_n$ such that
the edge set $E$ does not contain self-loops from a vertex $i$ to itself
nor parallel edges between the same vertices. An edge between $i$ and $j$ is
denoted by $ij$.
We say that a word $w = i_1i_2 \cdots i_k \in A_n^*$ is a $G$-\emph{word} if
each factor $i_ji_{j+1}$ is an edge of $E$. Thus a $G$-word is a walk on $G$.

A $k$-\emph{colouring} of a graph $G$ on $A_n$ is a morphism
$\varphi\colon A_n^* \to A_k^*$.
The graph $G$ has an \emph{infinite $k$-coloured square-free walk}, if
there is an infinite $G$-word $w$ such that $\varphi(w)$ is  square-free
for some $k$-colouring $\varphi$. We denote by $\gamma(G)$ the smallest~$k$, if it exists,
for which~$G$ has an infinite $k$-coloured square-free $G$-word.

Let $P_n$ denote the (simple) path on the vertices $A_n$ with the $n-1$ edges $i(i+1)$,
for $i=0,1, \ldots, n-2$.
Similarly, $C_n$ denotes  the cycle on $A_n$ with the $n$ edges $i(i+1)$,
where the indices are modulo $n-1$.
Finally, let $K_{1,3}$ be the graph on $A_4$, called the \emph{claw},
with the edges $01,02,03$.

The following theorem is our main result. It will be proven in the next section.
In particular, we show that, for connected graphs,
there exists an infinite square-free $G$-word if the graph $G$
is a cycle $C_m$, for $m \geq 3$, or a path $P_m$, for $m \geq 5$, or
it contains the claw $K_{1,3}$ as an induced subgraph.

\begin{theorem}
There exists an infinite square-free $G$-word if and only if the graph $G$
has a connected subgraph isomorphic to one of the graphs
\[
C_3, \ C_4, \ C_5, \ P_5, \ K_{1,3}\,.
\]
Moreover, if $G$ has a subgraph $C_m$ with $m =3$ or $P_n$ with $n \geq 5$,
then $\gamma(G)=3$, and, otherwise, if $G$ has $G=C_4$ or $K_{1,3}$
then $\gamma(G)=4$.
\end{theorem}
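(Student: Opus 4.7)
The plan is to split the theorem into three tasks: necessity (no infinite square-free walk when none of $C_3,C_4,C_5,P_5,K_{1,3}$ occurs as a subgraph of $G$), sufficiency together with the upper bounds on $\gamma$, and the matching lower bounds $\gamma(C_4),\gamma(K_{1,3})\ge 4$. For necessity, I would first observe that avoiding $K_{1,3}$ forces the maximum degree of $G$ to be at most $2$, so each component of $G$ is a path or a cycle. Since $C_m$ for $m\ge 6$ contains $P_5$ as a subgraph and the short cycles $C_3,C_4,C_5$ are ruled out by hypothesis, no component is a cycle; avoiding $P_5$ then bounds every path component to at most four vertices. On such a graph every infinite $G$-walk is a $\pm 1$-step sequence on a segment of length at most three, and a direct finite analysis (enumerating the possible behaviour near the two endpoints) shows that every sufficiently long walk contains a square.

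For sufficiency and the upper bounds I would treat the five subgraphs separately. For $C_3$ the Thue word $\Thue$ is already a $C_3$-walk, since its consecutive letters always differ, and it is square-free, giving $\gamma(C_3)\le 3$. For $P_5$---and hence also for $C_5$, which contains $P_5$ as a subgraph---the aim is to define a three-colouring $\varphi\colon A_5\to A_3$ together with a morphism $\alpha\colon A_3^*\to A_5^*$ such that every image $\alpha(i)$ is a $P_5$-walk, the images concatenate into a walk, and the composition $\varphi\circ\alpha$ equals a known square-free morphism such as $\tau$; iterating $\alpha$ on a single letter then produces an infinite $P_5$-walk whose $\varphi$-image is the Thue word. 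For $C_4$ and $K_{1,3}$, where four colours are available, the identity colouring suffices, and one exhibits an infinite square-free walk directly, either by an iterated square-free morphism on four letters whose letter-images happen to be walks or by lifting the Thue word through a suitable graph homomorphism.

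For the lower bounds, $\gamma(G)\ge 3$ is Thue's theorem. To show $\gamma(K_{1,3})\ge 4$, note that every walk on $K_{1,3}$ alternates between the centre $0$ and the leaves in $\{1,2,3\}$. Under any three-colouring $\varphi$, if a leaf shares its colour with $0$, then a centre--leaf--centre step already contains the square $aa$; hence the three leaves use only two colours. The image of the subsequence at leaf positions is then an infinite binary word, which by Thue contains a square, and this square lifts to a square in $\varphi(w)$. A parallel bipartite argument handles $C_4$: walks alternate between the classes $\{0,2\}$ and $\{1,3\}$, and a short case distinction on the three-colourings of these four vertices forces a binary square in every image. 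The main technical obstacle will be the $P_5$ construction: the linear constraint of $P_5$ means the Thue word is not itself a walk, so one has to build a ternary square-free morphism whose letter-images respect the $\pm 1$-step structure of $P_5$. Finding and verifying such an $\alpha$ is the heart of the proof.
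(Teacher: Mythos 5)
Your decomposition matches the paper's almost exactly: the reduction via $K_{1,3}$ to maximum degree two, the finite analysis of $P_3$ and $P_4$, the separate treatment of $C_3$, $P_5$, $C_4$ and $K_{1,3}$, and both lower-bound arguments (the leaf-colour pigeonhole for $K_{1,3}$ and the diagonal-versus-adjacent case split for a $3$-colouring of $C_4$) are all the paper's arguments. The items you defer as routine really are routine and coincide with what the paper does: the exhaustive check that the longest square-free $P_4$-word has length $15$, replacing each letter $i$ of a ternary square-free word by $i3$ at a degree-three vertex, and inserting the letter $3$ between occurrences of $02$ and $20$ to pass from $C_3$-words to $C_4$-words.

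The genuine gap is the $P_5$ construction, and it is worse than an acknowledged to-do, because the mechanism you propose cannot work. Since $P_5$ is bipartite with classes $\{0,2,4\}$ and $\{1,3\}$, suppose $\alpha\colon A_3^*\to A_5^*$ is a morphism whose images are $P_5$-walks and whose concatenation along $\Thue$ is again a walk. Every factor $ij$ with $i\ne j$ occurs in $\Thue$, so for every ordered pair the last vertex of $\alpha(i)$ must be adjacent to the first vertex of $\alpha(j)$; writing $c_i$ for the bipartition class of the first vertex of $\alpha(i)$, adjacency gives $c_j\equiv c_i+|\alpha(i)| \pmod 2$, and combining this with the reversed pair forces all three lengths $|\alpha(i)|$ to have the same parity. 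A letter-to-letter colouring preserves lengths, so $\varphi\circ\alpha=\tau$ is impossible, since $\tau$ has image lengths $3,2,1$; the same alternation argument shows $\Thue$ itself is never the colour image of a $P_5$-walk. More importantly, the paper could not arrange the composition to be a square-free morphism at all: its morphism (image lengths $24,16,8$) satisfies that $\alpha(010)$ contains a square, and the proof instead establishes the weaker property that $\alpha$ preserves square-freeness of square-free words avoiding the factor $010$ (a class containing $\Thue$), via a synchronization lemma asserting that distinct images do not overlap, so that a square in $\alpha(w)$ can be pushed back to a square or a forbidden factor in $w$. This restricted-square-freeness-plus-alignment argument is the missing idea in your plan; without it, or an explicit verified substitute, the bound $\gamma(G)\le 3$ for every $G$ containing $P_5$ --- which carries most of the theorem --- remains unproved.
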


\begin{corollary}
For  each graph $G$ of at least five vertices, $\gamma(G) \leq 4$.
\end{corollary}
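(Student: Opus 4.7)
The plan is to reduce the Corollary to the Theorem by showing that every connected graph $G$ with at least five vertices must already contain one of $C_3, C_4, C_5, P_5, K_{1,3}$ as a connected subgraph. A disconnected graph admits an infinite walk only inside one of its components, so attention may safely be restricted to one connected component with at least five vertices; for the Corollary to have nontrivial content (i.e.\ for $\gamma(G)$ to be defined) such a component must exist.

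The main step is a simple dichotomy on the maximum degree of the chosen component $H$. If some vertex of $H$ has degree at least three, then by the very definition of the claw $K_{1,3}$ there is an isomorphic subgraph, and the Theorem immediately gives $\gamma(G) \leq 4$. Otherwise every vertex of $H$ has degree at most two, so the connected graph $H$ is either a path or a cycle. A path on $n \geq 5$ vertices contains $P_5$; a cycle $C_m$ with $m \in \{3,4,5\}$ is itself on the list; and a cycle $C_m$ with $m \geq 6$ contains $P_5$ as an induced subpath. In each subcase the Theorem applies to produce an infinite square-free $G$-word with at most four colours, and in fact $\gamma(G) = 3$ in every subcase except when the only exhibited subgraph is $C_4$ or $K_{1,3}$.

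I expect no real obstacle: the argument is a short structural case analysis resting only on the elementary observation that a graph of maximum degree at most two is a disjoint union of paths and cycles. The only subtlety worth flagging is the implicit assumption that $G$ has some component large enough to accommodate an infinite walk; once that is in place, the reduction to the Theorem is immediate.
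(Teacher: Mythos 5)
Your proposal is correct and follows essentially the same argument as the paper: the dichotomy on maximum degree (a vertex of degree at least three yields $K_{1,3}$; otherwise the components are paths or cycles, each of which on five or more vertices contains $P_5$, $C_3$, $C_4$ or $C_5$), followed by an appeal to the main Theorem. Your explicit remark about restricting to a single component and about when $\gamma(G)$ is defined is a minor extra precaution, not a different route.
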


\begin{proof}
If $G$ does not have a subgraph $K_{1,3}$, then the maximum degree
of $G$ is at most two, and in this case the connected components of $G$
are either paths $P_m$ or cycles $C_k$.  The claim follows, since each path or cycle of five
vertices has a subgraph $P_5$, $C_3$,  $C_4$ or $C_5$.
(Note that $C_k$ for $k \geq 6$ contains $P_5$.)
\end{proof}

\section{Proof of the theorem}

Recall that in general a subgraph $H$ of a graph $G$ need not be induced, i.e.,
$H$ might miss some edges of $G$ that are between vertices of $H$.

\goodbreak
\begin{lemma}
Let $G$ be a graph. The following cases are equivalent.

\begin{itemize}
\item[(i)]
There exists an infinite square-free $G$-word.

\item[(ii)]
$G$ has a connected subgraph $H$
that has an infinite square-free $H$-word.
\end{itemize}
\end{lemma}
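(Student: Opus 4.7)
The plan is to verify both implications of the equivalence directly, as the statement is essentially a structural observation about walks. The direction (ii) $\Rightarrow$ (i) is immediate: if $H$ is a subgraph of $G$, then every edge of $H$ is an edge of $G$, so every $H$-word is automatically a $G$-word. Therefore an infinite square-free $H$-word witnessing (ii) also witnesses (i).

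For the non-trivial direction (i) $\Rightarrow$ (ii), the natural strategy is to let the square-free $G$-word itself construct the required subgraph. Given an infinite square-free $G$-word $w = i_1 i_2 \cdots$, I would define $H = (V_w, E_w)$ where $V_w = \{i_j : j \geq 1\} \subseteq A_n$ is the set of letters actually occurring in $w$, and $E_w = \{i_j i_{j+1} : j \geq 1\}$ is the set of edges actually traversed. Since $E_w \subseteq E$, $H$ is a subgraph of $G$.

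Then I would check three properties of $H$. First, $H$ is connected: every vertex in $V_w$ appears as some $i_j$ in $w$, and the walk $w$ itself provides a path in $H$ from $i_1$ to any such $i_j$. Second, $w$ is an $H$-word, because every consecutive pair $i_j i_{j+1}$ lies in $E_w$ by construction. Third, $w$ is square-free by hypothesis. Thus $H$ satisfies (ii).

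I do not expect any real obstacle here; the argument is essentially definitional, and the only minor point worth stating carefully is that $V_w$ is finite (being a subset of $A_n$) so $H$ is a legitimate finite subgraph, and that "subgraph" in the paper's terminology need not be induced, so discarding unused edges of $G$ between vertices of $V_w$ is permitted.
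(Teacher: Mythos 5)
Your proof is correct and follows essentially the same route as the paper: the direction (ii) $\Rightarrow$ (i) is the observation that $H$-words are $G$-words, and (i) $\Rightarrow$ (ii) rests on the fact that a walk stays within a single connected piece of $G$. The only cosmetic difference is that you take $H$ to be the subgraph actually traced out by the walk, whereas the paper implicitly takes the connected component containing it; both choices work for the same reason.
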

\begin{proof}
First, for each subgraph $H$ of $G$, each square-free $H$-word is also a square-free $G$-word.
Secondly,  if the vertices $i$ and $j$ of $G$ belong to
different connected components, then the letters $i,j \in A_n$ (where $n$ is the order of $G$)
never occur in the same $G$-word.
\end{proof}

\subsection{Negative cases}

Recall that $P_3$ has only three vertices and two edges $01$ and $12$.
Hence every second letter in a square-free $P_3$-word $w$ must be $1$,
and hence the word $w'$ obtained by deleting all occurrences of $1$ must
be a binary square-free word, and thus of length at most three.

The case for the graph $P_4$ is somewhat more complicated, but a systematic
study, by hand or aided by a computer, quickly ends.
The longest square-free $P_4$-words are of length 15. They are
$0121 0123 2101 210$ and the dual $321232101232123$ obtained by the permutation
$(0 \, 3)(1 \, 2)$.

\begin{lemma}\label{P4}
There are no square-free $P_4$-words of length 16.
\end{lemma}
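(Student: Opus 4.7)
The proof is a finite structural case analysis.

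The first step is to record local constraints. In $P_4$ the leaf $0$ has only $1$ as a neighbour and $3$ only $2$, so every internal $0$ is flanked by $1$'s and every internal $3$ by $2$'s. In particular, neither $010$ nor $323$ can occur in a square-free $P_4$-word of length at least $4$, because the only admissible extension on either side produces the square $(01)^2$ or $(32)^2$. Since $1212$ and $2121$ are squares, every maximal alternating run in $\{1,2\}$ has length at most $3$.

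These observations give a unique factorisation
\[
w = R_0\, y_1\, R_1\, y_2 \cdots y_k\, R_k
\]
of any square-free $P_4$-word of length at least $4$, in which $y_j \in \{0,3\}$ and each $R_j$ is alternating in $\{1,2\}$ of length at most $3$. For $1 \le j \le k-1$ the pair $(y_j, y_{j+1})$ forces $R_j$ to be one of $121,\,212,\,12,\,21$ (cases $00, 33, 03, 30$ respectively), so $w$ is determined by the skeleton $y \in \{0,3\}^k$ together with the two end-pieces $R_0, R_k$, each drawn from a short list of admissible alternating words. The length identity $|w| = k + \sum_{j=0}^{k} |R_j|$, combined with $|R_j| \in \{2,3\}$ internally and $|R_0|, |R_k| \le 3$, forces $k \in \{4,5,6\}$ when $|w| = 16$.

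The next step is to prune skeletons by propagating squares. Any factor $000$ in $y$ induces $(0121)^2$ in $w$ and any $333$ induces $(3212)^2$, so $y$ is cube-free. Short periodic skeletons combined with certain end-pieces also fail; for instance $030303$ with $R_0 = R_6 = \varepsilon$ gives $(012321)^2$, and the skeleton $00330$ contains the square $1232\cdot 1232$ straddling the join of the $00$ and $33$ blocks. After these reductions a short finite list of candidates remains, and each is eliminated by exhibiting an explicit square (for instance the skeleton $00300$ with $(|R_0|,|R_5|)=(1,0)$ gives the square $(1012)^2$ at the start). The extremal survivors turn out to be precisely $012101232101210$ and its image $321232101232123$ under the symmetry $0 \leftrightarrow 3,\, 1 \leftrightarrow 2$, both of length $15$; hence no square-free $P_4$-word of length $16$ exists.

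The main obstacle is the explicit enumeration in the last step: although the number of surviving cases is small, each requires careful square-spotting, and tracking the interaction between a skeleton and the admissible end-pieces $R_0, R_k$, including squares such as $(1232)^2$ that straddle block boundaries and are not visible at the skeleton level, is precisely the systematic study, by hand or aided by a computer, alluded to in the statement.
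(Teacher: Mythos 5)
Your proof is correct in substance and considerably more structured than the paper's, which simply asserts that a ``systematic study, by hand or aided by a computer, quickly ends'' and records the two extremal words of length~15. Your skeleton factorisation $w = R_0 y_1 R_1 \cdots y_k R_k$ is sound: once $010$ and $323$ are excluded, the internal runs are indeed forced to be $121, 212, 12, 21$ by the adjacent pair $(y_j,y_{j+1})$, and the bookkeeping $3k-2 \le 16 \le 4k+3$ correctly pins $k$ to $\{4,5,6\}$. This reduces a backtracking search over all $P_4$-walks of length~16 to a search over $\{0,3\}$-skeletons of length at most~6 with no triple letter, plus a handful of end-piece choices --- a genuine improvement, since it makes the hand verification feasible where the paper delegates it to a machine. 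Two small points. First, ``$y$ is cube-free'' overstates what you derived: you only established that $y$ avoids $000$ and $333$, which is all you use. Second, the residual enumeration (the $k=5$ skeletons with between zero and three repeated adjacent pairs, and the $k=4$ skeletons forced to carry long end-pieces) is still only sketched; since that finite check is the entire content of the lemma, a complete write-up would list those cases as explicitly as you do for $k=6$, where the skeleton must alternate and the word is forced to be $(012321)^2\,0123$ or its dual. As it stands, your argument matches the paper's level of rigour while exposing much more of the structure, and your identification of the two length-$15$ survivors agrees with the paper's.
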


\subsection{The cases $P_m$ for $m \geq 5$}

Consider the  morphism $\alpha\colon A_3^* \to A_3^*$ defined by
\begin{align*}
\alpha(0) & =201021202101201021012021 && \text{ of length  24},\\
\alpha(1) &= 2010212021012021  && \text{ of length  16},\\
\alpha(2) &= 20102101 && \text{ of length  8}.
\end{align*}

Note that the morphism $\alpha$ is not square free, since
\[\alpha(010)=2010212021012010210120(2120102120210120)^21021012021\,.\]
However, as we have seen, the Thue word $\Thue$ does not contain $010$.

The first case of the next lemma is seen to hold by checking
the short words by a computer program. The second claim is obvious, since
for $i \ne j$, the word $\alpha(i)$ does not have a suffix that is a prefix of $\alpha(j)$,
i.e., different words $\alpha(i)$ and $\alpha(j)$ do not overlap.
(Although $\alpha(2)$ is a factor of $\alpha(0)$.)

\begin{lemma}\label{le:1}
(a) \
If $w$ is of length 5 is square-free and does not contain $010$,
then also $\alpha(w)$ is square-free.

\smallskip
\noindent
(b)
The word $\alpha(i)$ with $i \in \{0,1\}$ is \emph{aligned in} $\alpha(A_3)^*$, i.e.,
if $w = i_1i_2 \cdots i_n \in A_3^*$ and $\alpha(w)=u_1\alpha(i)u_2$
then $u_1=\alpha(i_1 \cdots i_{k-1})$, $i=i_k$ and $u_2 =\alpha(i_{k+1} \cdots i_n)$.
\end{lemma}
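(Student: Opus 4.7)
For part (a), the plan is a finite case check. I would enumerate every ternary word $w$ of length $5$ that is square-free and avoids the factor $010$, compute $\alpha(w)$ for each such $w$, and scan the image for squares. The number of candidates is modest (a few dozen), so the check is mechanical; the paper itself signals this as a computer verification, which is the natural execution. Because $\alpha(i)$ has length at most $24$, any putative square in $\alpha(w)$ has bounded length, so the scan is trivial.

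For part (b), I would analyze, by cases on position, where an occurrence of $\alpha(i)$ (with $i \in \{0,1\}$) can sit inside the factorization $\alpha(w) = \alpha(i_1)\alpha(i_2)\cdots\alpha(i_n)$. Let $p$ be the starting position of the occurrence and let $k$ be the index of the block $\alpha(i_k)$ containing $p$. If $p$ is strictly interior to $\alpha(i_k)$, then a proper nonempty suffix of $\alpha(i_k)$ must equal a nonempty prefix of $\alpha(i)$, and possibly the occurrence continues through $\alpha(i_{k+1}), \alpha(i_{k+2}), \ldots$ and ends at a proper prefix of some later $\alpha(i_{k+m+1})$; I would rule out the starting overlap by tabulating the proper suffixes of $\alpha(0), \alpha(1), \alpha(2)$ and verifying that none is a prefix of $\alpha(0)$ or $\alpha(1)$, which immediately discards the whole scenario. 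If $p$ coincides with the start of $\alpha(i_k)$, the argument splits by whether $|\alpha(i)|$ equals, exceeds, or is less than $|\alpha(i_k)|$. Since the three block lengths $24$, $16$, $8$ are distinct, equality of lengths forces $i = i_k$ by injectivity of $\alpha$; the other two subcases require $\alpha(i_k)$ to be a proper prefix of $\alpha(i)$ or vice versa, and these are eliminated by a direct comparison of the explicit images (the length gaps trim the number of subcases to a handful, e.g.\ only $\alpha(2)$ could conceivably prefix $\alpha(1)$, and the first place of disagreement appears within six letters).

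The only genuine subtlety is that $\alpha(2)$ does appear as an internal factor of $\alpha(0)$, and this is precisely why the lemma restricts to $i \in \{0,1\}$; the case analysis for (b) never needs alignment for $i = 2$, since all overlap tests keep $\alpha(0)$ or $\alpha(1)$ as the target prefix, and these two words are neither internal factors of any $\alpha(j)$ nor in a nontrivial prefix/suffix relation with any $\alpha(j)$. Beyond that observation, the work is combinatorial bookkeeping with no conceptual obstruction. The main risk is clerical error when comparing long explicit words, which is no doubt why the paper invokes computer verification in part (a) rather than typesetting the whole table.
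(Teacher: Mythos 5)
Your proposal matches the paper's own argument: part (a) is dispatched by the same finite enumeration of square-free, $010$-avoiding words of length $5$ followed by a mechanical scan of their images, and part (b) rests on the same non-overlapping property (no proper suffix of any $\alpha(j)$ is a prefix of $\alpha(0)$ or $\alpha(1)$, and no $\alpha(j)$ is a proper prefix of another) that the paper states in one line before the lemma. Your version is in fact somewhat more careful than the paper's, spelling out the case split on where the occurrence starts and correctly isolating the reason the lemma excludes $i=2$ (namely that $\alpha(2)$ occurs inside $\alpha(0)$), so it is correct and needs no changes.
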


\begin{lemma}\label{P5}
Let $w$ be a square-free word that does not have a factor $010$.
Then also $\alpha(w)$ is a square free.
For the Thue word $\Thue$,  the infinite word $\alpha(\Thue)$ is
a $3$-coloured square-free $P_5$-word.
\end{lemma}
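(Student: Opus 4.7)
The plan is to prove the two claims in sequence, using both parts of Lemma~\ref{le:1}. For the first claim, I would argue by contradiction: suppose $\alpha(w)$ contains a square $v^2$ with $v$ nonempty. Writing the occurrence of $v^2$ inside the factorisation $\alpha(w)=\alpha(i_1)\alpha(i_2)\cdots$, let $u$ be the shortest factor of $w$ whose image covers the whole $v^2$. The subword $u$ inherits both square-freeness and the absence of $010$ from $w$. If $|u|\leq 5$, Lemma~\ref{le:1}(a) yields that $\alpha(u)$ is square-free, contradicting $v^2\subseteq\alpha(u)$.

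If instead $|u|>5$, the plan is to reduce back to a square in $w$ by invoking the alignment property of Lemma~\ref{le:1}(b). A length estimate shows $|v|$ then exceeds a threshold (comfortably larger than the maximum block length~$24$) forcing each copy of $v$ to contain a full occurrence of $\alpha(0)$ or $\alpha(1)$ as a factor. By alignment, such an occurrence must sit at a block boundary of the $\alpha$-factorisation; comparing the two copies of $v$ then forces $v=\alpha(u')$ for some nonempty factor $u'$ of $w$, and hence $u'u'$ is a factor of $w$, contradicting square-freeness. The main obstacle here is the transitional range of $|v|$ where $v$ covers only two or three blocks without necessarily containing a full $\alpha(0)$ or $\alpha(1)$; this contributes finitely many configurations, which should either fall under Lemma~\ref{le:1}(a) applied to a slightly longer window of $w$ or be dispatched by a small separate computation.

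For the second claim, the Thue word $\Thue$ is square-free and, as noted just after~\eqref{thue}, avoids $010$, so the first claim immediately gives that $\alpha(\Thue)$ is an infinite square-free word over~$A_3$. It remains to realise $\alpha(\Thue)$ as the $\varphi$-image of an infinite $P_5$-walk for some $3$-coloring~$\varphi$. My plan is to take the folding $\varphi\colon A_5\to A_3$ with $\varphi(0)=\varphi(3)=0$, $\varphi(1)=\varphi(4)=1$, $\varphi(2)=2$, and then construct a walk $\widetilde{w}$ on $P_5$ with $\varphi(\widetilde{w})=\alpha(\Thue)$ by stepwise lifting, starting at vertex~$2$ (forced by the first letter of $\alpha(\Thue)$). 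Under $\varphi$ every vertex of $P_5$ has its neighbours coloured pairwise distinctly, so the lift is deterministic; the only way it could fail is if $\widetilde{w}$ reached a leaf (vertex $0$ or $4$) while the next demanded colour were not the unique colour available there. A short inspection of $\alpha(0),\alpha(1),\alpha(2)$ and their boundary junctions should confirm that the lift visits vertex $0$ only inside occurrences of the factor $101$ (where the forced next colour is~$1$) and vertex~$4$ only inside~$010$; thus the lift never gets stuck, and $\widetilde{w}$ is the required $P_5$-walk.
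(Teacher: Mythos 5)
Your strategy for the first claim has the same skeleton as the paper's proof (kill short squares with Lemma~\ref{le:1}(a), use the alignment property of Lemma~\ref{le:1}(b) for long ones), but the step ``comparing the two copies of $v$ then forces $v=\alpha(u')$'' is exactly where the real difficulty sits, and as stated it is false. Alignment of a full $\alpha(0)$ or $\alpha(1)$ inside each copy of $v$ only forces the two copies to have the same \emph{phase} relative to the block factorisation of $\alpha(w)$; it does not make $v$ itself a concatenation of blocks. The generic surviving configuration is $v=s\,\alpha(m)\,p$ with $s$ a nonempty proper suffix of the block preceding the square, $p$ a nonempty proper prefix of a block, and the block $\alpha(q)$ straddling the junction between the two copies satisfying $\alpha(q)=p\,s$, where $p$ is simultaneously a prefix of $\alpha(q)$ and of the block following the square, and $s$ is simultaneously a suffix of $\alpha(q)$ and of the block preceding it. This configuration produces \emph{no} square in $w$, so it cannot be dismissed by square-freeness of $w$; it has to be excluded by inspecting the images themselves. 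The paper does precisely this: it derives $\alpha(p)=v_2v_1$ and rules it out using the fact that every block begins with $2010$ and that $2010$ occurs in the images only as a prefix and once in the interior of $\alpha(0)$. Your ``transitional range'' caveat concerns a different and lesser issue, so this junction analysis is genuinely missing from your plan. (Your length estimate is also too optimistic: a shortest covering factor $u$ of length $6$ only guarantees $|v|\geq 17$, which does not exceed $|\alpha(0)|=24$, so a copy of $v$ may contain only a full $\alpha(2)$, to which Lemma~\ref{le:1}(b) does not apply.)

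For the second claim your route is sound and packaged slightly differently from the paper's: you lift $\alpha(\Thue)$ deterministically to a $P_5$-walk using the colouring $0,1,2,0,1$ along the path, whereas the paper writes down an explicit morphism $\beta$ with $\alpha=\varphi\beta$ whose images are $P_5$-walks. The two devices are equivalent --- your lift of each block $\alpha(i)$ starts at vertex $2$ and ends at vertex $1$, and these lifts are exactly the letters' $\beta$-images --- and in both cases a finite verification remains (that the lift never reaches a leaf demanding an unavailable colour, equivalently that the $\beta(i)$ are walks that concatenate correctly), which you defer but which does check out. Your choice of colouring has the virtue that every vertex of $P_5$ sees distinctly coloured neighbours, so the lift is forced; note that the colouring printed in the paper reproduces $\alpha$ only after exchanging the colours $0$ and $1$, which is harmless for square-freeness but shows your variant is at least as clean.
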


\begin{proof}
Assume  that $\alpha(w)$ contains a nonempty square, say, for some $u \in A_3^*$
and letters $p$, $p_1$, $p_2 \in A_3$,
\begin{align*}
&\alpha(w)=w_1u^2w_2, \ \text{ where }
u = v_1\alpha(v)v_2 = v'_1\alpha(v')v'_2\,,\\
&v_2v'_1 = \alpha(p), \
v_1 \ \text{ is a suffix of }\alpha(p_1), \
v'_2 \ \text{ is a prefix of }\alpha(p_2)\,.
\end{align*}
By Lemma~\ref{le:1}(a), $v$ and $v'$ are nonempty, and by Lemma~\ref{le:1}(b),
$v=v'$  and also $v_1=v'_1$ and $v_2=v'_2$.
Now $\alpha(p)=v_2v_1$, where $v_1$ is a suffix and $v_2$ a suffix of some words
$\alpha(i)$ and $\alpha(j)$, respectively.
One sees that this is not the case for any $p,i,j$, simply because $\alpha(p)$ begins with
the special word $2010$ that occurs only as prefix of the words and
once in the middle of $\alpha(0)$.

Therefore,   $\alpha(\Thue)$ is square-free,
since all its finite prefixes are square-free
and $\Thue$ does not have any occurrences of the factor $010$.

For the second claim, consider the path $P_5$ with the edges $01,12,23,34$,
and let the colouring
of the vertices be $\varphi\colon A_5 \to A_3$ defined by
$\varphi(0)=1$,  $\varphi(1)=0$,  $\varphi(2)=2$,  $\varphi(3)=0$, $\varphi(4)=1$.
Then the morphism $\beta\colon A_3^* \to A_5^*$ defined by
\begin{align*}
\beta(0) & =201023202343201023432023\\
\beta(1) &=2010232023432023\\
\beta(2) &= 20102343\,
\end{align*}
satisfies $\alpha=\varphi\beta$. Since $\alpha(\Thue)$ is square-free, so is $\beta(\Thue)$.
Clearly $\beta(\Thue)$ is a $P_5$-word.
\end{proof}

\begin{corollary}\label{Pm}
We have $\gamma(G)=3$ for all connected graphs containing a subgraph $P_5$.
In particular, for each $P_n$  and $C_n$ with $n \geq 5$ there exists
an infinite 3-coloured square-free  $P_n$-word.
\end{corollary}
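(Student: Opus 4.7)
The plan is to derive the corollary directly from Lemma~\ref{P5}, separating the lower bound $\gamma(G)\ge 3$ from the upper bound $\gamma(G)\le 3$.

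For the lower bound, I would first observe that any connected graph $G$ containing $P_5$ supports infinite walks (one can already go back and forth along a single edge), so $\gamma(G)$ is at least defined. Under any $2$-colouring the image of an infinite $G$-word is an infinite binary word, and since every binary square-free word has length at most three, such an image cannot be square-free. Hence $\gamma(G)\ge 3$ holds automatically for every $G$ covered by the corollary.

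For the upper bound, the idea is to piggyback the colouring of $P_5$ used in Lemma~\ref{P5} onto $G$. Fix a $P_5$-subgraph of $G$ on vertices $v_0,v_1,v_2,v_3,v_4$ with edges $v_iv_{i+1}$. Define $\varphi\colon V(G)\to A_3$ by $\varphi(v_0)=1$, $\varphi(v_1)=0$, $\varphi(v_2)=2$, $\varphi(v_3)=0$, $\varphi(v_4)=1$, and let $\varphi$ take an arbitrary value (say $0$) on every vertex outside $\{v_0,\dots,v_4\}$. The $P_5$-word $\beta(\Thue)$ from Lemma~\ref{P5}, after the identification $i\mapsto v_i$, is an infinite walk that uses only edges of the $P_5$-subgraph, hence an infinite $G$-word. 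By construction its $\varphi$-image equals $\alpha(\Thue)$, which is square-free by Lemma~\ref{P5}. This witnesses $\gamma(G)\le 3$.

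The second sentence then follows without extra work: $P_n$ for $n\ge 5$ obviously contains $P_5$, and $C_n$ for $n\ge 5$ contains $P_5$ by keeping any four consecutive edges of the cycle. In both cases the first part of the corollary applies and delivers an infinite $3$-coloured square-free walk. I do not foresee any genuine obstacle here; the only minor point to check is that assigning arbitrary colours to the vertices outside the chosen $P_5$ is harmless, which is immediate because the exhibited walk never visits those vertices.
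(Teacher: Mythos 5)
Your proposal is correct and follows essentially the same route as the paper: the lower bound comes from the nonexistence of infinite square-free binary words, and the upper bound comes from transporting the $3$-coloured square-free $P_5$-word $\beta(\Thue)$ of Lemma~\ref{P5} into the $P_5$-subgraph of $G$. You merely spell out the harmless extension of the colouring to the remaining vertices, which the paper leaves implicit.
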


\begin{proof}
Indeed, each $P_n$ and $C_n$, with $n \geq 5$, has a subgraph equal to~$P_5$.
Moreover, we always have that if $G$ contains a $P_5$, then $\gamma(G) \geq 3$,
since there are no infinite square-free binary words. Hence the first claim holds.
\end{proof}

\subsection{The case for graphs with subgraphs $C_3, C_4$ or $K_{1,3}$}

We first consider the claw $K_{1,3}$.

\begin{lemma}\label{degree}
Let $G$ be a graph with a vertex of degree at least three.
Then there exists an infinite $4$-coloured square-free $G$-word.
Moreover, there does not exist any infinite $3$-coloured square-free $K_{1,3}$-words.
\end{lemma}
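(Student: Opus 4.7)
The plan is to split the lemma into its two independent halves and treat each by a short, concrete argument.

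For the positive direction, let $v \in A_n$ be a vertex of $G$ of degree at least three and fix three distinct neighbours $a,b,c$. Relabel $\Thue$ over the three-letter alphabet $\{a,b,c\}$ to obtain an infinite square-free sequence $t_1 t_2 t_3 \cdots$, and form the infinite $G$-walk
\[
w = v\, t_1\, v\, t_2\, v\, t_3\, v \cdots,
\]
which is legitimate because every step uses one of the edges $va$, $vb$, $vc$. Take the $4$-colouring $\varphi$ that assigns $v, a, b, c$ four distinct colours and colours the other vertices of $G$ arbitrarily; since those vertices never occur in $w$, the image $\varphi(w)$ is just $w$ with letters renamed. To rule out squares in $\varphi(w)$, I would argue by parity: odd-indexed positions carry the centre $v$ while even-indexed positions carry leaves, so a square $u^2$ with $|u|$ odd would equate a centre letter with a leaf letter; for $|u| = 2k$ the comparison of the two halves on the even-indexed positions forces $(t_i)_i$ itself to contain a square, contradicting its choice.

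For the negative direction, fix any $3$-colouring $\varphi \colon A_4 \to A_3$ and any infinite $K_{1,3}$-word $w$. Because $0$ is the sole vertex adjacent to each of $1, 2, 3$, the walk alternates between $0$ and a leaf, so (up to deleting at most one initial letter) $w = 0\, a_1\, 0\, a_2\, 0\, a_3 \cdots$ with $a_j \in \{1, 2, 3\}$. I would branch on whether some leaf shares the centre's colour. If $\varphi(i) = \varphi(0)$ for some leaf $i$ that actually occurs in $(a_j)$, the factor $0i$ of $w$ maps to $\varphi(0)\varphi(0)$, already a square. Otherwise either that $i$ never occurs (so $(a_j)$ uses only two leaves) or no leaf has colour $\varphi(0)$ at all (so the leaf colours lie in the two-element set $A_3 \setminus \{\varphi(0)\}$), and in either case $(\varphi(a_j))_j$ takes at most two distinct values. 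Every infinite binary word contains a square, so $(\varphi(a_j))_j$ does.

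The single point needing care, and the main obstacle, is to verify that a square in the subsequence $(\varphi(a_j))_j$ lifts to a square in $\varphi(w)$ itself. This is short: if $\varphi(a_{j+\ell}) = \varphi(a_{j+k+\ell})$ for $0 \le \ell < k$, then the factor of $\varphi(w)$ of length $4k$ that begins just before $a_j$ has first half $\varphi(0)\varphi(a_j)\varphi(0)\varphi(a_{j+1}) \cdots \varphi(0)\varphi(a_{j+k-1})$ and second half $\varphi(0)\varphi(a_{j+k}) \cdots \varphi(0)\varphi(a_{j+2k-1})$; these agree because the odd positions are all $\varphi(0)$ and the even positions match by hypothesis. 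With this lifting established, both halves of the lemma are complete.
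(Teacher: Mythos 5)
Your proof is correct and follows essentially the same route as the paper: the positive half interleaves the high-degree vertex with a relabelled square-free ternary word on three of its neighbours, and the negative half uses the pigeonhole principle on three colours to reduce the leaf subsequence to a binary word, which must contain a square that lifts back to $\varphi(w)$. If anything, your version is slightly more complete, since you explicitly treat the case where an occurring leaf shares the centre's colour (giving an immediate length-two square) and you write out the lifting of a square from the subsequence $(\varphi(a_j))_j$ to $\varphi(w)$, both of which the paper's terser argument leaves implicit.
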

\begin{proof}
Without loss of generality, we can assume that the vertex~$3$ of~$G$
has the neighbours $0,1,2$.
Let $w$ be an infinite square-free word $w$ over $A_3$,
and consider the infinite word $w'$ obtained by replacing
each $i \in A_3$ by the word $i3$ of length two. It is clear that~$w'$ is still square-free
and also that it is a $G$-word, where four colours are present.

For the negative case, let $w= \varphi(u)$ for a $3$-colouring  $\varphi$ of $G$
and a $G$-word $u=3i_13i_2 \cdots$. Now  also $\varphi(i_1i_2 \cdots)$
must be square-free. However, we have two equicoloured neighbours $i$ and $j$
of the vertex~$3$, say $\varphi(i) = \varphi(j)$, and thus
$\varphi(i_1i_2 \cdots)$ is a binary word, and not square-free at all.
\end{proof}

By Lemma~\ref{degree}, we can assume that the maximum degree of the
graph $G$ is two, and hence the connected components are either paths or cycles.
 By Lemma~\ref{P4} and Corollary~\ref{Pm},
this leaves the connected graphs that are cycles $C_3$ or $C_4$.

\begin{lemma}\label{le:cycles}
There exists an infinite square-free $C_n$-graph in all cases $n \geq 3$.
We have $\gamma(C_3)=3$ and $\gamma(C_4)=4$.
\end{lemma}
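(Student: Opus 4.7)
The plan is to split by the value of $n$. For $n \geq 5$, the graph $C_n$ contains $P_5$ as a subgraph, so the infinite square-free $P_5$-word $\beta(\Thue)$ produced in Lemma~\ref{P5} is already an infinite square-free $C_n$-word. For $n=3$, the graph $C_3$ is complete on $A_3$, so every infinite square-free word over $A_3$---in particular $\Thue$---is automatically a $C_3$-word; this gives $\gamma(C_3)\leq 3$, matched by $\gamma(C_3)\geq 3$ since no infinite binary square-free word exists. The substantive case is $n=4$.

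For the existence of an infinite square-free $C_4$-word (and so $\gamma(C_4)\leq 4$ via the identity colouring of $A_4$), my plan is, in the spirit of Lemma~\ref{P5}, to produce a morphism $\mu\colon A_3^*\to A_4^*$ whose three images $\mu(0),\mu(1),\mu(2)$ are $C_4$-walks sharing a common starting vertex and a common ending vertex (so that concatenations are $C_4$-walks via the edge between them) and which have no non-trivial pairwise suffix--prefix overlaps. The analogue of Lemma~\ref{le:1}(b) then applies, and a finite check on short factors of $\Thue$ reduces the square-freeness of $\mu(\Thue)$ to that of $\Thue$ itself, in complete parallel with the proof of Lemma~\ref{P5}.

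For the lower bound $\gamma(C_4)\geq 4$, let $\varphi\colon A_4\to A_3$ be any $3$-colouring. By pigeonhole, two vertices of $C_4$ share a colour, and by the dihedral symmetry of $C_4$ there are two cases. If the shared pair is opposite, say $\varphi(0)=\varphi(2)=\alpha$, then the bipartite structure of $C_4$ forces every other letter of $\varphi(w)$ to equal $\alpha$, while the remaining letters lie in the at-most-two-element set $\{\varphi(1),\varphi(3)\}$; avoiding the length-$4$ square $(\alpha c)^2$ forces these intermediate letters to alternate, which immediately produces a square (either $(\alpha\varphi(1))^2$ or $(\alpha\,\varphi(1)\,\alpha\,\varphi(3))^2$). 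If the shared pair is adjacent, say $\varphi(0)=\varphi(1)$, and $\varphi$ uses only two colours, then $\varphi(w)$ is binary and cannot be infinite square-free; otherwise, after relabelling, $\varphi$ maps $(0,1,2,3)$ to $(0,0,1,2)$ and $\varphi(w)$ is a word over $A_3$ with odd positions in $\{0,1\}$ and even positions in $\{0,2\}$ (or vice versa).

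The main obstacle is this last three-colour adjacent sub-case. My plan is to group $\varphi(w)$ into consecutive two-letter blocks; the block $00$ is itself a length-$2$ square, so the block alphabet reduces to $\{B,C,D\}=\{02,10,12\}$. Aligned length-$4$ squares rule out consecutive equal blocks, avoidance of $00$ across a block boundary rules out the transition $C\to B$, and a staggered length-$4$ square at an even position between blocks forbids the triplet $B\,D\,C$; in addition, the block sequence itself must be square-free over $\{B,C,D\}$ to kill aligned length-$4m$ squares for all $m\geq 2$. A short backtracking enumeration (with a computer check as a safety net) shows that under all these constraints every partial sequence closes off after bounded length, so no infinite square-free $\varphi(w)$ exists in this sub-case, completing the proof that $\gamma(C_4)\geq 4$.
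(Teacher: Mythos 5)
The decisive gap is in the existence half for $C_4$: you never exhibit the morphism $\mu\colon A_3^*\to A_4^*$ on which that half rests. Announcing a \emph{plan} to find three $C_4$-walks with a common start, a common end and no suffix--prefix overlaps, after which ``the analogue of Lemma~\ref{le:1}(b) then applies,'' leaves the entire content of the step unproved: producing such a morphism and verifying it \emph{is} the proof, and non-overlapping alone is not sufficient --- you would still need the analogue of Lemma~\ref{le:1}(a) and the final ``$\alpha(p)=v_2v_1$'' analysis from the proof of Lemma~\ref{P5}, none of which you address. The paper gets $\gamma(C_4)\le 4$ with a two-line device that needs no morphism machinery at all: take any square-free word over $A_3$ (automatically a $C_3$-word) and insert the letter $3$ between every adjacent pair $0,2$ or $2,0$; if the resulting word had a square, deleting the occurrences of $3$ from it would produce a nonempty square in the original word. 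Until you either write down a concrete $\mu$ and do the checks, or switch to an argument of this kind, the upper bound is not established.

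Your lower bound $\gamma(C_4)\ge 4$ is logically sound: the case split is exhaustive, the opposite-pair and two-colour cases are handled correctly, and the constraints you impose on the block sequence over $\{B,C,D\}$ (no $BB$, $CC$, $DD$, no $CB$, no $BDC$, and square-freeness of the block word) do in fact kill every sequence within about eight blocks, as the enumeration you defer to would confirm. But you missed the observation that makes the adjacent-pair subcase immediate: if $\varphi(0)=\varphi(1)$, the walk can never traverse the edge $01$, since that would create the length-two square $\varphi(0)\varphi(1)$; hence the walk is confined to $C_4$ minus that edge, which is a $P_4$, and since any square in the walk maps to a square in its colouring, Lemma~\ref{P4} bounds the walk by length $15$. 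This is precisely what the paper means by ``otherwise the $C_4$-word coloured by $\varphi$ would be a $P_4$-word,'' and it replaces your entire block analysis. I would keep your clean treatment of the opposite-pair case, substitute the $P_4$ reduction for the adjacent-pair case, and, above all, supply an actual construction for the upper bound.
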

\begin{proof}

For $n=3$, we observe that every square-free word over the alphabet $A_3$ is
also a $C_3$-word. Thus the case for $C_3$ is clear.

Let then $n=4$.
If $w \in A_{n-1}^*$ is a square-free $C_{n-1}$-word, then $w' \in A_n^*$
is a square-free $C_n$-word, where $w'$ is obtained by inserting
the letter $n-1$ between every pair $(0,n-2)$ and $(n-2,0)$.
Indeed, if $w'$ has a square $u^2$, then  the deletion of $n-1$
would yield a nonempty square in $w$; a contradiction.
This shows that $\gamma(C_4) \leq 4$.

Consider then a 3-colouring $\varphi$ of $C_4$, and assume $\varphi(w)$
is a square-free $C_4$-word.
Then two diagonal elements of the square $C_4$
must obtain the same colour, say $\varphi(1)=\varphi(3)$;
otherwise the  $C_4$-word coloured by $\varphi$ would be a $P_4$-word.
Now, the colour $\varphi(1)=\varphi(3)$ occurs in every second place in
$\varphi(w)$, and hence as in the proof of Lemma~\ref{degree}, the
case reduces to binary words, giving a  contradiction.
\end{proof}

We have now a simplified proof of a result due to Dean~\cite{Dean}.

\begin{corollary}[Dean]\label{Dean}
There exists an infinite square-free word $w$
that is reduced in the free group of two generators.
\end{corollary}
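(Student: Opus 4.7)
The plan is to exhibit a labelling of the vertices of $C_4$ by the letters $a, b, a^{-1}, b^{-1}$ of the free group on two generators so that a walk on $C_4$ translates directly into a reduced word. First, I would set the cyclic labelling $0 \mapsto a$, $1 \mapsto b$, $2 \mapsto a^{-1}$, $3 \mapsto b^{-1}$. Under this bijection the two diagonals of $C_4$, namely the non-edges $02$ and $13$, correspond exactly to the two inverse pairs $\{a,a^{-1}\}$ and $\{b,b^{-1}\}$, while the four edges of $C_4$ pick out the four pairs of distinct non-inverse letters.

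Next, since $C_4$ has no self-loops, consecutive letters in a $C_4$-word are automatically distinct, and by the previous observation they are also never mutually inverse. Hence every $C_4$-word, read in the above labelling, is a reduced word of the free group $F_2 = \langle a,b \rangle$.

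Finally, I would invoke Lemma~\ref{le:cycles}, which supplies an infinite square-free $C_4$-word $w$. Relabelling $w$ via the bijection above yields an infinite word over $\{a,b,a^{-1},b^{-1}\}$ that is simultaneously square-free (as a word) and reduced (as an element of $F_2$), which is the content of the corollary.

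There is no serious obstacle here: the entire argument reduces to the observation that the complement of $C_4$ on $\{a,b,a^{-1},b^{-1}\}$ is precisely the graph encoding forbidden adjacencies in a reduced word, so the result is an immediate corollary of Lemma~\ref{le:cycles}. The only thing worth emphasising is that using three letters would not suffice, since by Lemma~\ref{degree} (or equivalently the second half of Lemma~\ref{le:cycles}) one cannot colour the four letters with three colours and still obtain an infinite square-free $C_4$-walk; this matches the well-known fact that $\gamma(C_4)=4$.
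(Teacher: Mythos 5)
Your proposal is correct and follows essentially the same route as the paper: take an infinite square-free $C_4$-word from Lemma~\ref{le:cycles}, interpret vertex $2$ as the inverse of generator $0$ and vertex $3$ as the inverse of generator $1$, and observe that the non-edges (diagonals) of $C_4$ are exactly the inverse pairs, so no cancellation can occur. Your added remarks (distinctness of consecutive letters, and the optimality of four letters via $\gamma(C_4)=4$) are correct but not needed for the statement.
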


\begin{proof}
We apply Lemma~\ref{le:cycles} to $C_4$. Let $w$ be a square-free $C_4$-word.
One interprets~$2$ as the inverse element of the generator~$0$
and $3$ as the inverse element of the generator~$1$.
Since $0$ and $2$, and $1$ and $3$, respectively, are never adjacent in $w$
the word $w$ is reduced in the free group.
\end{proof}

We also can show the existence of an infinite square-free $C_4$-word
by considering a suitable \emph{uniform morphism},
where there is a constant $m$ such that $|\alpha(a)|=m$ for all letters $a$.
Define $\alpha\colon A_4^* \to A_4^*$ by
\begin{align*}
\alpha(0) &=010301210323\,,\\
\alpha(1) &=010301230323\,,\\
\alpha(2) &=010301232123\,,\\
\alpha(3) &=010321030123\,,
\end{align*}
where the images have length 12.
For this we rely on the following  theorem by Crochemore~\cite{Crochemore}.

\begin{theorem}[Crochemore]\label{Crochemore}
A uniform morphism  $h\colon A^* \to A^*$ is square-free
if and only if~$h$ preserves square-freeness of words of length~3.
\end{theorem}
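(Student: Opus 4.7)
The forward implication is immediate from the definition: a square-free morphism preserves square-freeness of every word, in particular of those of length three. For the converse, I would argue by contradiction. Assume $h$ is uniform with $|h(a)| = m$ for every $a \in A$ and that $h$ preserves square-freeness on all square-free words of length three, but some square-free word $w$ has $h(w)$ containing a square. Pick such a counterexample $w = a_1 a_2 \cdots a_n$ of minimum length; then $n \geq 4$ by the hypothesis. Write $h(w) = x \cdot vv \cdot y$ with $v$ nonempty.

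The first step is a localisation using minimality. Chopping $a_1$ (respectively $a_n$) from $w$ must destroy the square, otherwise a shorter counterexample would exist. Hence the first copy of $v$ starts strictly inside $h(a_1)$ and the second copy ends strictly inside $h(a_n)$, giving $|x|<m$ and $|y|<m$.

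The short-period subcase $|v|\le m$ then disposes of itself: $|vv|\le 2m$ fits inside three consecutive images $h(a_i)h(a_{i+1})h(a_{i+2})$. Since $a_i a_{i+1} a_{i+2}$ is a square-free word of length three (as a factor of $w$), the hypothesis gives that $h(a_i a_{i+1} a_{i+2})$ is square-free, a direct contradiction.

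The long-period subcase $|v|>m$ is where the main work lies and is the step I expect to be the main obstacle. Here I would analyse how the cut-points between successive blocks $h(a_j)$ are shifted by the period $|v|$. Writing $|v| = qm + r$ with $0\le r<m$, the equality of the two copies of $v$, combined with the uniform block structure, forces identifications between blocks: when $r=0$ this yields $h(a_j)=h(a_{j+q})$ for a range of $j$, and when $r\ne 0$ it gives matched equalities between a suffix of $h(a_j)$ with a prefix of $h(a_{j+q+1})$ and a prefix of $h(a_{j+1})$ with a suffix of $h(a_{j+q})$. The plan is to exploit these identifications to extract from $w$ a strictly shorter square-free word $w'$ whose image still contains a square, contradicting minimality. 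The delicate part is the $r\ne 0$ case: one must carefully slide the block boundaries to collapse the repetition while checking both that $w'$ is still square-free and that the copied factor persists in $h(w')$. Once this is done, the minimality of $n$ is contradicted, completing the argument.
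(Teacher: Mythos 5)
First, note that the paper does not prove this statement at all: it is quoted as a known result of Crochemore and used as a black box, so there is no internal proof to compare against. Judged on its own terms, your proposal is sound up to and including the short-period case (the forward direction, the reduction to a minimal counterexample with $n\ge 4$, the localisation $|x|<m$, $|y|<m$, and the observation that a square of length $\le 2m$ sits inside three consecutive blocks are all correct, modulo the routine check that square-free words of length $1$ and $2$ are factors of square-free words of length $3$). But the long-period case $|v|>m$, which you yourself flag as ``the main obstacle,'' is left as a plan rather than an argument, and that plan points in a direction that does not quite work: for $r\ne 0$ there is no meaningful way to ``collapse'' the repetition by deleting letters of $w$, since the period is not an integer number of blocks, and in any case deleting a factor from a square-free word need not leave a square-free word.

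The missing ingredient is a synchronisation lemma, and it is exactly here that the length-$3$ hypothesis must be used a second time: if $h(a)$ occurs as a factor of $h(bc)$ at an internal position $0<j<m$, write $h(a)=sp$ with $s$ a nonempty proper suffix of $h(b)$ and $p$ a nonempty proper prefix of $h(c)$; then $h(ac)=sp\,ps'$ contains the square $pp$ and $h(ba)=p's\,sp$ contains the square $ss$, so the hypothesis forces $a=c$ and $a=b$, which is impossible when $bc$ is a factor of a square-free word. (Injectivity of $h$ on letters follows similarly, since $h(a)=h(b)$ with $a\ne b$ makes $h(ab)$ a square.) With this lemma in hand one argues as follows in the case $|v|>m$: some full block $h(a_i)$ lies inside the square $vv$, so by the period $|v|$ it reappears at offset $r=|v|\bmod m$ inside $h(a_sa_{s+1})$; synchronisation forces $r=0$, and then injectivity turns the block identifications $h(a_j)=h(a_{j+q})$ into letter identities $a_j=a_{j+q}$, producing a square in $w$ itself --- a contradiction with the square-freeness of $w$, with no appeal to minimality needed. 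Until you supply this lemma and carry out this step, the converse direction is not proved.
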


One can deduce, or check by a computer,
that $\alpha$ preserves square-freeness of length three words. Therefore,
by Theorem~\ref{Crochemore}, $\alpha(w)$ is square-free for all square-free
infinite words $w$ over $A_4$. Clearly, $\alpha(w)$ is $C_4$-word.

\section{Tournament words}

The above problem can be modified for oriented graphs, i.e., directed graphs
$G=(A_n,E)$, where $ij \in E$ implies $ji \notin E$. A \emph{tournament}
is an orientation of a complete graph. A word $w \in A_n^*$
is a \emph{tournament word}, if for each different $i,j \in A_n$, if the word  $ij$
is a factor of $w$, then $ji$ is not a factor.

The case first case of the following result is obtained by a systematic computer search.

\begin{theorem}
(a)
The longest square-free tournament word over the four letter alphabet $A_4$ has length 20.
These longest words are $w=01201320120320132032$ and those obtained from $w$
by permuting the letters.

\smallskip
\noindent (b)
There exists infinite square-free  tournament words over $A_5$.
\end{theorem}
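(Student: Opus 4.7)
The two parts call for rather different strategies. The plan is to handle (a) by a structured exhaustive search, and (b) by an explicit morphic construction analogous to those already used in the paper.

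For part (a), I would set up a backtracking enumeration over $A_4^*$ maintaining two invariants incrementally as letters are appended: the current prefix is square-free, and the set of bigrams used so far contains no reversed pair $\{ij,ji\}$ with $i\ne j$. Each test costs constant time per extension, because adding one letter can create only suffix-squares whose right copy ends at the new last letter. To shrink the tree I would quotient by the $S_4$-action on $A_4$, which preserves both square-freeness and the tournament property, fixing the initial prefix $012$ without loss of generality. The plan is then to verify by enumeration that (i) every branch dies at depth at most $20$, (ii) $w=01201320120320132032$ is attained as a depth-$20$ leaf, and (iii) the depth-$20$ leaves fall into a single $S_4$-orbit. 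A direct sanity check on $w$ itself is trivial: its length-two factors are exactly the six ordered bigrams $01,12,20,13,32,03$, which form a tournament on $A_4$ (every unordered pair occurs in one direction only).

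For part (b), the plan is to produce an infinite square-free tournament word as a fixed point of a morphism $h\colon A_5^*\to A_5^*$ of the kind used in Section~2.2 and in Theorem~\ref{Crochemore}. First, fix a strongly connected tournament $T$ on $A_5$; a convenient choice is the cyclic one with $i\to j$ iff $j-i\pmod 5\in\{1,2\}$, under which every vertex has out-degree two and infinite walks exist in abundance. Second, design a uniform morphism $h$ so that every internal bigram of each $h(i)$ is an edge of $T$, and every boundary bigram of the form (last letter of $h(i)$)(first letter of $h(j)$) also lies in $T$, for every pair $(i,j)$ that can occur consecutively in some square-free word over $A_5$. Third, invoke Theorem~\ref{Crochemore} to reduce the square-freeness of $h^\omega(0)$ to a finite check on square-free triples from $A_5$, which is then fully mechanical. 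By construction the resulting infinite word is a walk on $T$ and hence a tournament word.

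The main obstacle lies in (b): the tournament constraint halves the number of admissible bigrams on $A_5$, from $20$ ordered pairs down to $10$, and this noticeably restricts the morphisms one may use, so a uniform $h$ satisfying both the bigram and Crochemore conditions is tight to find. If a uniform candidate proves elusive, my fallback is a non-uniform morphism modelled on the $\alpha$ of Section~2.2, whose square-freeness is established by the alignment argument of Lemma~\ref{le:1}(b) and Lemma~\ref{P5}: once the $h(i)$ are chosen with pairwise non-overlapping images, any hypothetical square in $h^\omega(0)$ can be located at a fixed offset relative to the block structure and ruled out by a finite inspection of the short prefixes and suffixes of the $h(i)$. Part (a) presents no conceptual obstacle beyond executing and documenting the search to certify termination at depth $20$.
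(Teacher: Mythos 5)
Your treatment of part (a) is essentially the paper's: the paper simply states that the bound $20$ and the uniqueness up to permutation are obtained by a systematic computer search, and your backtracking scheme with incremental square/bigram tests and $S_4$-reduction is a reasonable way to organize that search. (Minor quibble: extending by one letter is not a constant-time test, since one must check suffix-squares of every even length up to the current length, but this does not affect correctness.)

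Part (b), however, has a genuine gap: the entire content of the proof is an explicit construction, and you never exhibit one. You describe the shape of a morphism you would like to find and yourself flag that it may not exist in the uniform, fixed-point form you propose. Indeed, your boundary condition is quite severe: since any ordered pair $(i,j)$ with $i\neq j$ occurs consecutively in some square-free word over $A_5$, the last letter of every $h(i)$ must have, in your tournament $T$, an out-edge to the first letter of every other image; with out-degree two at each vertex of the cyclic tournament this forces all images to end in the same letter and to begin with one of only two letters, which in turn makes the Crochemore check over the $80$ square-free triples of $A_5$ delicate. A plan whose feasibility is uncertain is not a proof. The paper avoids all of this by taking a uniform morphism $\alpha\colon A_3^*\to A_5^*$ with $\alpha(0)=0123014$, $\alpha(1)=0130124$, $\alpha(2)=0120134$, applied to an infinite ternary square-free word: every image ends in $i4$ and the letter $4$ occurs nowhere else, so $4$ acts as an end-of-block marker and the preceding letter identifies the preimage, whence any square in $\alpha(w)$ forces a square in $w$; the ten bigrams occurring (internally and across the unique boundary bigram $40$) orient each unordered pair of $A_5$ in exactly one direction, so the image is a tournament word. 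Mapping from $A_3$ rather than iterating on $A_5$ is what makes the boundary constraints trivial, and is the idea your proposal is missing. Your fallback (non-uniform images with an alignment argument as in Lemma~\ref{le:1}) is closer in spirit to the paper's device, but it too remains unexecuted.
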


\begin{proof}
For the second claim, consider the uniform morphism $\alpha\colon A_3^* \to A_4^*$ defined by
\begin{align*}
\alpha(0) &= 0123014\,,\\
\alpha(1) &= 0130124\,,\\
\alpha(2) &= 0120134\,.
\end{align*}
The images $\alpha(i)$ have length $7$.
The morphism $\alpha$ can be easily seen to be square-free,
since the letter $4$ occurs only at the end of the images,
and in $\alpha(i)$ the letter is preceded by the letter $i$.
Hence if $w$ is an infinite square-free word over $A_3$, also
$\alpha(w)$ is square-free, and clearly it is a tournament word a $G$-word over $A_5$.
\end{proof}

\bibliographystyle{plain}
\bibliography{bib-sqfree}

\end{document}